\theoremstyle{plain}
\newtheorem{thm}[subsection]{Theorem}
\newtheorem{prop}[subsection]{Proposition}
\newtheorem{cor}[subsection]{Corollary}
\theoremstyle{definition}
\newtheorem{rk}[subsection]{Remark}
\newtheorem{definition}[subsection]{Definition}
\newtheorem{ex}[subsection]{Example}
\numberwithin{equation}{section} \setcounter{tocdepth}{1}
\newcommand{\N}{\mathbb{N}}
\newcommand{\C}{\mathbb{C}}
\newcommand{\OO}{\mathcal{O}}
\newcommand{\g}{\mathcal{G}}
\newcommand{\id}{\mathcal{D}}
\newcommand{\T}{\mathcal{T}}
\newcommand{\h}{\mathcal{H}}
\newcommand{\m}{\mathcal{M}}
\newcommand{\ir}{\mathcal{R}}
\newcommand{\ia}{\mathcal{A}}
\newcommand{\ik}{\mathcal{K}}
\begin{document}
\title[Determinacy of Determinantal Varieties]{Determinacy of Determinantal Varieties}

\author[I. Ahmed]{Imran Ahmed$^*$}
\thanks{* Partially supported by FAPESP, grant \# 2014/00304-2}
\address{Imran Ahmed, Department of Mathematics, COMSATS Institute of Information Technology,
M.A. Jinnah Campus, Defence Road, off Raiwind Road,
Lahore-Pakistan.}
\email{drimranahmed@ciitlahore.edu.pk}
\author[M.A.S. Ruas]{Maria Aparecida Soares Ruas$^*$}
\address{Maria Aparecida Soares Ruas, Departamento de Matem\'{a}tica, Instituto de Ci\^{e}ncias Matem\'{a}ticas e
de Computa\c{c}\~{a}o, Universidade de S\~{a}o Paulo, Avenida
Trabalhador S\~{a}ocarlense 400, S\~{a}o Carlos-S.P., Brazil.}
\email{maasruas@icmc.usp.br}

\begin{abstract}

A more general class than complete intersection singularities is the
class of determinantal singularities. They are defined by the vanishing
of all the minors of a certain size of a $m\times n$-matrix. In
this note, we consider $\g$-finite determinacy of matrices defining a special class of determinantal varieties. They are called essentially isolated
determinantal singularities (EIDS) and were defined by Ebeling and Gusein-Zade \cite{EG}. In this note, we prove that matrices parametrized by generic homogeneous forms of degree $d$ define EIDS. It follows that $\mathcal{G}$-finite determinacy of matrices hold in general. As a consequence, EIDS of a given type $(m,n,t)$ holds in general.
\end{abstract}

\maketitle

\section{Introduction}

Let $M_{mn}\cong\C^{mn}$ be the space of $m\times n$-matrices with
complex entries. Let $M_{mn}^t\subset M_{mn}$ consists of matrices
of rank less than $t$ with $1\leq t\leq\min\{m,n\}$.
One can prove that $M_{mn}^t$ is an irreducible singular algebraic
variety of codimension $(m-t+1)(n-t+1)$, see \cite{BV}. The singular locus of $M_{mn}^t$ coincides with
$M_{mn}^{t-1}$. The singular locus of $M_{mn}^{t-1}$ is
$M_{mn}^{t-2}$ etc., see \cite{ACGH}. The variety $M_{mn}^t$ can be
written as
$$M_{mn}^t=\bigsqcup_{i=1,\ldots,t}(M_{mn}^i\backslash M_{mn}^{i-1}).$$
This partition is a Whitney stratification of $M_{mn}^t$.

Let $F:U\subset\C^N\to M_{mn}$. For each $x\in U\subset\C^N$,
$F(x)=(f_{ij}(x))$ is a $m\times n$-matrix, the coordinates
$f_{ij}$ are complex analytic functions on $U$. The determinantal
variety $X$ of type $(m,n,t)$ in an open domain $U\subset\C^N$ is
the preimage of the variety $M_{mn}^t$ given by $X=F^{-1}(M_{mn}^t)$
such that codimension of $X$ is $(n-t+1)(m-t+1)$ in $\C^N$. We denote by $f$ the map defined by the $t\times t$ minors of $F$.

The essential isolated determinantal singularities (EIDS) were
defined by Ebeling and Gusein-Zade in \cite{EG}. A germ
$(X,0)\subset(\C^N,0)$ of a determinantal variety of type $(m,n,t)$
has an essentially isolated determinantal singularity (EIDS) at
the origin if $F$ is transverse to all strata $M_{mn}^i\backslash
M_{mn}^{i-1}$ of the stratification of $M_{mn}^t$ in a punctured
neighbourhood of the origin. The singular set of $X$ is given by
$F^{-1}(M_{mn}^{t-1})$. We observe that
if $X$ has isolated singularity at the origin, then
$N\leq(m-t+2)(n-t+2)$, see \cite{EG}.

J.W. Bruce in \cite{B} and A. Fr\"{u}hbis-Kr\"{u}ger in \cite{F} defined the group $\mathcal{G}$ acting on the space of matrices, see Definition \ref{d1}.
M.S. Pereira, in \cite{P1} and \cite{P2}, proved that $F:(\C^N,0)\to M_{mn}$ is $\mathcal{G}$-
finitely determined if and only if the determinantal variety $X^t=F^{-1}(M_{mn}^t)$ is an EIDS for all $1\leq t\leq \min \{m,n\}$.

In Theorem \ref{t1}, we prove that $\mathcal{G}$-finite determinacy holds in general. It follows that almost all determinantal varieties of type $(m,n,t)$
are EIDS, see Corollary \ref{c1}. Our proof relies on two other results of $\mathcal{G}$-finite determinacy: Proposition \ref{l1} in which we extend a criterion for finite determinacy of \cite{BRS} to the case of determinantal variety and Proposition \ref{p1} in which we prove that almost all matrices with homogeneous entries of degree $d$ are $\mathcal{G}$-finitely determined. In Proposition \ref{p3}, we give a method to get examples of homogeneous EIDS of arbitrarily high degree from a special class of linear EIDS.

\section{Preliminaries}

Let $\OO_N$ be the ring of germs at $0$ of analytic functions on $\C^N$ and $\m$
its maximal ideal. Let $M_{mn}(\OO_N)$ be the set of holomorphic map germs $F:(\C^N,0)\to M_{mn}$.
This set can be identified with $\OO_N^{mn}$,
where $\OO_N^{mn}$ is a free $\OO_N$-module of rank $mn$.

Let $\ir$ be the group of analytic diffeomorphism germs in
$(\C^N,0)$ which is given by changes of coordinates in source and
$\h=GL_n(\OO_N)\times GL_m(\OO_N)$, where $Gl_n(\OO_N)$ (respectively $Gl_m(\OO_N)$) is the invertible group of $n\times n$ matrices (respectively $m\times m$ matrices) with entries in $\OO_N$. Let $\g=\ir\ltimes \h$ be the
semi-direct product of $\ir$ and $\h$. One can prove that $\g$ is
a geometric subgroup of $\ik$ (\cite{D} and \cite{P1}).

 We say that two germs $F_1,\,F_2\in M_{mn}(\OO_N)$ are $\g$-equivalent if
 there exist germs of diffeomorphisms $h\in\ir$,
 $R\in GL_n(\OO_N)$ and $L\in GL_m(\OO_N)$ such that $F_1=L^{-1}(h^* F_2)R$.

Given a matrix $F\in M_{mn}(\OO_N)$, let $C_{pq}(F)$ (respectively
$R_{lk}(F)$) be the matrix having the $p$-th column (respectively
the $l$-th row) equal to the $q$-th column of $F$ (respectively the
$k$-th row) with zeros elsewhere. We denote by $J(F)$ the submodule $M_{mn}(\OO_N)$ generated by the matrices of the
form $\frac{\partial F}{\partial x_{\lambda}}$ for $\lambda=1,\ldots,N$.

The $\g$-tangent space to a germ $F$ is given by
$$\T\g F=\m J(F)+\OO_N\{R_{lk}(F), C_{pq}(F)\},$$
where $1\leq l,\,k\leq m$ and $1\leq p,\,q\leq n$, see \cite{B}, \cite{P1} and \cite{P2}.
The extended tangent space is given by
$$\T\g_e F=J(F)+\OO_N \{R_{lk}(F), C_{pq}(F)\}.$$
Moreover, the $\g$-codimension, respectively extended $\g$-codimension, of $F$ are given by
$$d(F,\g)=\dim_{\C}\frac{M_{mn}(\OO_N)}{T\g F}\mbox{   and   }d_e(F,\g)=\dim_{\C}\frac{M_{mn}(\OO_N)}{T\g_e F}.$$



Let $\underline{w}=(w_1,\ldots,w_N)\in \N^N$ be a set of weights.
For any monomial $x^{\alpha}=x_1^{\alpha_1}\ldots x_N^{\alpha_N}$,
we define the $\underline{w}$-filtration of $x^{\alpha}$ by
$$fil(x^{\alpha})=\sum_{i=1}^N w_i\alpha_i.$$
We define a filtration on the ring $\OO_N$ as follows
$$fil(f)=\inf_{\alpha}\{fil(x^{\alpha})\,|\,\frac{\partial^{|\alpha|}f}{\partial x^{\alpha}}(0)\neq 0\}$$
for all germ $f\in\OO_N$, where $|\alpha|=\alpha_1+\ldots+\alpha_N$.
We can extend this notion to the ring of 1-parameter families of
germs on $N$ variables, putting
$$fil(x^{\alpha}t^{\beta})=fil(x^{\alpha}).$$
Given a matrix $F\in M_{mn}(\OO_N)$, we define
$fil(F)=\id=(d_{ij})$, where $d_{ij}=fil(f_{ij})$ for $1\leq i\leq
m$, $1\leq j\leq n$.

A matrix $F\in M_{mn}(\OO_N)$ is called weighted homogeneous of type
$(\id;\underline{w})\in M_{mn}(\N)\times \N^N$ if
$fil(f_{ij})=d_{ij}$ with respect to
$\underline{w}=(w_1,\ldots,w_N)$ such that the following relations
are satisfied
$$d_{ij}-d_{ik}=d_{lj}-d_{lk}\,\,\,\forall\,\,\,1\leq i,\,l\leq m,\,1\leq j,\,k\leq n.$$

We recall now $\g$-finite determinacy of the matrices $F:(\C^N,0)\to M_{mn}(\OO_N)$, see \cite{F}.

\begin{definition}\label{d1}
The matrix $$F:(\C^N,0)\to M_{mn}(\OO_N)$$ is $r$-$\g$-determined if for every $G\in M_{mn}(\OO_N)$, $j^rG(0)=j^rF(0)$, then $F\stackrel{\g}{\sim}G$.
If such $r$ exist, we say that $F$ is $\g$-finitely determined.
\end{definition}

The following theorems were proved in \cite{P1}.

\begin{thm}(\cite{P1}, Th. 2.3.1) \label{tp1}
The following conditions are equivalent.\\
\noindent (i) $F\in M_{mn}(\OO_N)$ is $\g$-finitely determined.\\
\noindent (ii) $T\g F\supset \m_N^k M_{mn}(\OO_N)$ for some positive integer $k$.\\
\noindent (iii) $d_e(F,\g)<\infty$.
\end{thm}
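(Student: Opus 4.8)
The plan is to prove the equivalences by the cycle $(i)\Rightarrow(iii)\Rightarrow(ii)\Rightarrow(i)$, exploiting throughout that $\g=\ir\ltimes\h$ is a geometric subgroup of the contact group $\ik$, so that the general unfolding/determinacy machinery for geometric subgroups is available. In fact the statement is essentially the specialization of that theory to the $\OO_N$-module $M_{mn}(\OO_N)$, but I would also indicate the argument directly.

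For $(ii)\Leftrightarrow(iii)$ I would run the usual Nakayama/finiteness argument. The quotient $M_{mn}(\OO_N)/T\g_e F$ is a finitely generated module over the Noetherian local ring $\OO_N$, so $d_e(F,\g)<\infty$ holds iff this module is annihilated by some power $\m_N^k$, i.e. iff $\m_N^k M_{mn}(\OO_N)\subset T\g_e F$. Comparing the two tangent spaces, $T\g_e F=J(F)+\OO_N\{R_{lk}(F),C_{pq}(F)\}$ against $T\g F=\m_N J(F)+\OO_N\{R_{lk}(F),C_{pq}(F)\}$, and noting that the ``unipotent'' summand $\OO_N\{R_{lk}(F),C_{pq}(F)\}$ is already an $\OO_N$-submodule, I get $\m_N\cdot T\g_e F\subset T\g F$, whence $\m_N^{k+1}M_{mn}(\OO_N)\subset T\g F$; the reverse containment $T\g F\subset T\g_e F$ gives the other direction. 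So $(ii)$ and $(iii)$ are just two formulations of finite codimension.

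For $(i)\Rightarrow(iii)$ I would argue by contraposition: if $d_e(F,\g)=\infty$ then, by the above, $\m_N^r M_{mn}(\OO_N)\not\subset T\g F$ for every $r$, so for each $r$ there is a matrix $P\in\m_N^{r+1}M_{mn}(\OO_N)$ whose class is nonzero in $M_{mn}(\OO_N)/T\g F$; the germs $F$ and $F+P$ then share the $r$-jet of $F$ but are not $\g$-equivalent (the first-order obstruction to equivalence being exactly the class of $P$), so $F$ is not $r$-determined for any $r$. The substantive implication is $(ii)\Rightarrow(i)$, for which I would use the homotopy (Mather--Thom--Levine) method: assuming $\m_N^k M_{mn}(\OO_N)\subset T\g F$ and $j^kG(0)=j^kF(0)$ so that $G-F\in\m_N^{k+1}M_{mn}(\OO_N)$, form the family $F_t=F+t(G-F)$, $t\in[0,1]$; since $F_t\equiv F\pmod{\m_N^{k+1}}$, a parametrized Nakayama lemma over $\OO_N\{t\}$ upgrades the inclusion to the family statement $\m_N^{k+1}M_{mn}(\OO_N\{t\})\subset T\g F_t$, so that $\dot F_t=G-F$ lies, for each $t$, in the tangent space to the $\g$-orbit through $F_t$; solving the resulting homological equation for a vector field on the source together with matrix paths $R_t\in GL_n(\OO_N)$, $L_t\in GL_m(\OO_N)$, and integrating over $t\in[0,1]$, produces $h\in\ir$, $R\in GL_n(\OO_N)$, $L\in GL_m(\OO_N)$ with $F_1=L^{-1}(h^*F_0)R$, i.e. $G\stackrel{\g}{\sim}F$, so $F$ is $k$-determined.

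The hardest part will be this last step. One must (a) pass from the tangent-space inclusion for $F$ to its parametrized version with uniform control in $t$, which needs a relative Nakayama/Malgrange-preparation argument; (b) solve the homological equation with a solution depending analytically on the parameter, so that the flows of the resulting vector fields actually exist; and (c) control convergence in the analytic (as opposed to formal) category, which is most cleanly handled either by working first with formal power series and then invoking Artin approximation, or simply by quoting the determinacy theorem for geometric subgroups of $\ik$ once $\g$ has been recognized as one. The remaining bookkeeping --- tracking the exact power of $\m_N$ required, and checking that the unipotent part of $\g$ causes no difficulty because it contributes an honest $\OO_N$-submodule to every tangent space --- is routine.
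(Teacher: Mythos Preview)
The paper does not actually prove this theorem: it is quoted without proof from Pereira's thesis \cite{P1}, and the paper has already remarked that $\g$ is a geometric subgroup of $\ik$ (citing \cite{D} and \cite{P1}), which is really the whole point --- the statement is the specialization of Damon's determinacy theorem for geometric subgroups. So there is no in-paper proof to compare your attempt against.

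Your outline is the standard one and is essentially correct. The equivalence $(ii)\Leftrightarrow(iii)$ via Nakayama together with the observation $\m_N\, T\g_e F\subset T\g F\subset T\g_e F$ is clean. The homotopy argument for $(ii)\Rightarrow(i)$ is the right template, and you correctly flag the analytic-category issues (parametrized preparation, convergence or Artin approximation) as the real work; in practice one simply invokes \cite{D} once $\g$ is known to be geometric. The one place where your sketch is genuinely loose is $(i)\Rightarrow(iii)$: the assertion that $P\notin T\g F$ forces $F+P\not\sim_{\g} F$ is not something one can read off from a first-order obstruction --- two germs can be $\g$-equivalent while their difference lies outside the tangent space. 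The honest argument is that $r$-determinacy means the $\g$-orbit of $F$ contains the entire affine set $F+\m_N^{r+1}M_{mn}(\OO_N)$, and for a geometric subgroup the orbit is a submanifold with tangent space $T\g F$ at $F$; hence $T\g F\supset\m_N^{r+1}M_{mn}(\OO_N)$, which is $(ii)$ and a fortiori $(iii)$. This again leans on the geometric-subgroup machinery of \cite{D}, which is unavoidable here.
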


\begin{thm}(\cite{P1}, Th. 2.4.1)\label{tp2}
The following conditions are equivalent.\\
\noindent (i) $F:(\C^N,0)\to M_{mn}$ is $\g$-finitely determined.\\
\noindent (ii) $X^t=F^{-1}(M_{mn}^t)$ is an EIDS for all $1\leq t\leq \min\{m,n\}$.
\end{thm}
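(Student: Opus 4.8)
The plan is to derive Theorem \ref{tp2} from Theorem \ref{tp1} by localising the algebraic condition and matching it, point by point, with transversality to the rank strata. I would set up the chain
\[
F \text{ is }\g\text{-finitely determined} \iff d_e(F,\g)<\infty \iff d_e(F_x,\g)=0 \ \text{ for all } x\neq 0 \text{ near } 0,
\]
in which the first equivalence is Theorem \ref{tp1}(i)$\Leftrightarrow$(iii), and then argue that the last condition, once unwound, says exactly that $X^t=F^{-1}(M_{mn}^t)$ is an EIDS for every $1\le t\le\min\{m,n\}$.

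For the second equivalence I would use coherence. The extended tangent space $T\g_e F=J(F)+\OO_N\{R_{lk}(F),C_{pq}(F)\}$ is a finitely generated $\OO_N$-submodule of $M_{mn}(\OO_N)$, so on a small representative $U$ the quotient $\mathcal N=M_{mn}(\OO_N)/T\g_e F$ underlies a coherent $\OO_U$-module whose stalk at $x\in U$ is $M_{mn}(\OO_{N,x})/T\g_e F_x$, where $F_x$ is the germ of $F$ at $x$ (with value $F(x)$, not necessarily $0$) and $T\g_e F_x$ is built from the same generators localised at $x$. Hence $d_e(F,\g)=\dim_{\C}\mathcal N_0$ is finite if and only if $\mathcal N_0$ has finite length, i.e.\ $0$ is isolated in $\operatorname{supp}\mathcal N$, i.e.\ $\mathcal N_x=0$ (equivalently $d_e(F_x,\g)=0$) for every $x\neq 0$ in a punctured neighbourhood of $0$. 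This is the standard Mather--Gaffney mechanism, available here because $\g$ is a geometric subgroup of $\ik$ (\cite{D}).

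The heart of the argument is then the local computation at a point $x_0\neq 0$. Put $A_0=F(x_0)$, let $r=\operatorname{rank}A_0$, and let $\Sigma_r=M_{mn}^{\,r+1}\setminus M_{mn}^{\,r}$ be the stratum of rank-$r$ matrices, so $A_0\in\Sigma_r$. Reducing the generators of $T\g_e F_{x_0}$ modulo $\m_{x_0}$ one gets the subspace of $M_{mn}$ spanned by the $\partial F/\partial x_\lambda(x_0)$, namely $dF_{x_0}(\C^N)$, together with the spans of the $R_{lk}(A_0)$ and $C_{pq}(A_0)$, which are $\{CA_0:C\in M_m\}$ and $\{A_0D:D\in M_n\}$; their sum $\{CA_0+A_0D\}$ is exactly $T_{A_0}\Sigma_r$, because the orbits of $GL_m\times GL_n$ on $M_{mn}$ are the rank strata and $\Sigma_r$ is the (smooth) orbit of $A_0$. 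By Nakayama, $T\g_e F_{x_0}=M_{mn}(\OO_{N,x_0})$ iff $T\g_e F_{x_0}+\m_{x_0}M_{mn}(\OO_{N,x_0})=M_{mn}(\OO_{N,x_0})$, and the latter, read in $M_{mn}$, is $dF_{x_0}(\C^N)+T_{A_0}\Sigma_r=M_{mn}$, i.e.\ transversality of $F$ to $\Sigma_r$ at $x_0$. Thus $d_e(F_{x_0},\g)=0$ is equivalent to $F$ being transverse at $x_0$ to the rank stratum through $F(x_0)$ (automatic when $F(x_0)$ has full rank). Quantifying over all $x_0\neq 0$ in a punctured neighbourhood, $d_e(F,\g)<\infty$ is equivalent to $F$ being transverse to $M_{mn}^i\setminus M_{mn}^{i-1}$ off the origin for all $i$, which is by definition the statement that $X^t$ is an EIDS for all $1\le t\le\min\{m,n\}$. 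Since every step is an equivalence, this proves (i)$\Leftrightarrow$(ii).

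The step I expect to be the main obstacle is this local one: one must check carefully that the module generators $R_{lk}(F),C_{pq}(F)$ reduce modulo $\m_{x_0}$ to the full tangent space $T_{A_0}\Sigma_r$ (equivalently the Zariski tangent space of the rank stratum, of the expected dimension $mn-(m-r)(n-r)$), that no normalisation is lost by allowing $F(x_0)=A_0\neq 0$ in the tangent-space formulas, and that the coherence argument genuinely upgrades ``$d_e(F_x,\g)=0$ off $0$'' to ``$d_e(F,\g)<\infty$'' and back. Each of these is routine, but together they carry the proof; the remaining ingredients are Theorem \ref{tp1} and the definition of an EIDS.
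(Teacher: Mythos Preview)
The paper does not actually prove this theorem: it is quoted from \cite{P1} in the Preliminaries section (``The following theorems were proved in \cite{P1}'') and stated without argument, so there is no in-paper proof to compare your proposal against.

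That said, your outline is the standard and correct route to this result, and is almost certainly the argument given in \cite{P1}. The two steps you identify are exactly the right ones: (a) the coherence/Mather--Gaffney step reducing $d_e(F,\g)<\infty$ to vanishing of the quotient sheaf off the origin, available because $\g$ is a geometric subgroup of $\ik$; and (b) the Nakayama computation at a point $x_0\neq 0$, where the generators $R_{lk}(F),C_{pq}(F)$ reduce modulo $\m_{x_0}$ to $\{CA_0+A_0D:C\in M_m,\,D\in M_n\}=T_{A_0}\Sigma_r$ (the tangent space to the $GL_m\times GL_n$-orbit, i.e.\ the rank stratum), so that $T\g_eF_{x_0}=M_{mn}(\OO_{N,x_0})$ is equivalent to transversality of $F$ to $\Sigma_r$ at $x_0$. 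Your identification of $\operatorname{span}\{R_{lk}(A_0)\}=M_mA_0$ and $\operatorname{span}\{C_{pq}(A_0)\}=A_0M_n$ is correct, since $R_{lk}(A_0)=E_{lk}A_0$ and $C_{pq}(A_0)=A_0E_{qp}$ for elementary matrices $E_{lk},E_{qp}$.

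One small point worth making explicit when you write it up: the EIDS definition in the paper presupposes that $X^t$ has the expected codimension $(m-t+1)(n-t+1)$. Transversality off the origin forces $F^{-1}(M_{mn}^t)\setminus\{0\}$ to have this codimension (or to be empty when $N<(m-t+1)(n-t+1)$), so the codimension hypothesis is automatically part of the transversality conclusion; you should say so rather than leave it implicit.
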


For a Cohen-Macaulay codimension $2$ determinantal variety with isolated singularity, the following result was proved in \cite{P1} and \cite{P2}.

\begin{thm}(\cite{P1}, Th. 2.6.1)\label{tp3}
Let $F:(\C^N,0)\to M_{(n+1)n}$ define a Cohen-Macaulay codimension $2$ isolated singular variety $X=F^{-1}(M_{(n+1)n}^n)$. Let $f$ be the map defined by the $n\times n$ minors of $F$. Then, the following conditions are equivalent.\\
\noindent (i) $X\cap V(J(f))=\{0\}$, where $J(f)$ is the ideal generated by the $2\times 2$-minors of the Jacobian matrix of $f$.\\
\noindent (ii) $J(f)+\langle f_1,\ldots,f_{n+1}\rangle\supset \m_N^r$, for some positive integer $r$.\\
\noindent (iii) $F$ is $\g$-finitely determined.
\end{thm}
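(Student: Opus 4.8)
The argument splits into an algebraic equivalence $(i)\Leftrightarrow(ii)$, a structural identity relating $X\cap V(J(f))$ to $\operatorname{Sing}(X)$, and a geometric equivalence $(i)\Leftrightarrow(iii)$ routed through Theorem~\ref{tp2}; throughout I write $M^i$ for $M_{(n+1)n}^i$. \emph{Step 1 ($(i)\Leftrightarrow(ii)$)}: this is nothing but the analytic (R\"uckert) Nullstellensatz. The ideal $J(f)+\langle f_1,\dots,f_{n+1}\rangle\subset\OO_N$ has zero-set germ exactly $V(J(f))\cap X$, so this germ is $\{0\}$ if and only if the ideal contains some power $\m_N^r$ of the maximal ideal. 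Nothing else is needed at this step.

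\emph{Step 2 ($X\cap V(J(f))=\operatorname{Sing}(X)$).} This is where the Cohen--Macaulay/codimension-$2$ hypothesis enters decisively. Since $I_n(F)=\langle f_1,\dots,f_{n+1}\rangle$ has codimension $2$, it is perfect of grade $2$ (Hilbert--Burch), so $\OO_N/I_n(F)$ is Cohen--Macaulay and hence equidimensional of dimension $N-2$. Consequently, at every $p\in X$ the Zariski tangent space $T_pX=\ker\operatorname{Jac}(f)(p)$ has dimension $\ge N-2$, that is $\operatorname{rank}\operatorname{Jac}(f)(p)\le 2$, with equality exactly when $X$ is smooth at $p$. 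Since $V(J(f))$ is the locus where all $2\times 2$ minors of the Jacobian of $f$ vanish, i.e. where $\operatorname{rank}\operatorname{Jac}(f)\le 1$, intersecting with $X$ yields precisely $\operatorname{Sing}(X)$. In particular $(i)$ is equivalent to $\operatorname{Sing}(X)\subseteq\{0\}$.

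\emph{Step 3 ($\operatorname{Sing}(X)\subseteq\{0\}\Leftrightarrow(iii)$).} By Theorem~\ref{tp2}, $F$ is $\g$-finitely determined if and only if each $X^t=F^{-1}(M^t)$, $1\le t\le n$, is an EIDS, i.e. $F$ is transverse to every stratum $M^i\setminus M^{i-1}$ on a punctured neighbourhood of $0$; so I would verify this transversality stratum by stratum. Over the open stratum --- a point $p\ne 0$ with $\operatorname{rank}F(p)=n-1$ --- the $\h$-action puts $F$ near $p$ into a block form with invertible $(n-1)\times(n-1)$ part, the remaining data being the $2\times 1$ Schur-complement column $G=(g_1,g_2)^{\top}$, which vanishes at $p$; one checks that locally $I_n(F)=\langle g_1,g_2\rangle$, so near $p$ the variety $X$ is the complete intersection $V(g_1,g_2)$, smooth at $p$ exactly when $\operatorname{rank}\operatorname{Jac}(g_1,g_2)(p)=2$, which is precisely transversality of $F$ to $M^n\setminus M^{n-1}$ at $p$. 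Over the deeper strata --- $\operatorname{rank}F(p)\le n-2$ --- every $(n-1)\times(n-1)$ minor of $F$ vanishes at $p$, hence $df_i(p)=0$ for all $i$ and $p\in X\cap V(J(f))=\operatorname{Sing}(X)$. Therefore $\operatorname{Sing}(X)\subseteq\{0\}$ forces $F^{-1}(M^{n-1})\subseteq\{0\}$, making transversality to the lower strata on a punctured neighbourhood vacuous, while over the open stratum it is equivalent to smoothness of $X$ off $0$; combining, $\operatorname{Sing}(X)\subseteq\{0\}$ $\Leftrightarrow$ all $X^t$ are EIDS $\Leftrightarrow$ $F$ is $\g$-finitely determined.

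The part I expect to be delicate is the transversality bookkeeping in Step~3: one must justify that the Schur-complement normal form is valid on a whole neighbourhood of a rank-$(n-1)$ point, compute the maximal minors in that form carefully enough to read off the ideal $\langle g_1,g_2\rangle$, and confirm that ``smoothness of the complete-intersection model'' $\Leftrightarrow$ ``transversality to the open stratum'' $\Leftrightarrow$ ``$\operatorname{Jac}(g_1,g_2)$ has rank $2$'' is the correct chain of translations, while making sure the deeper strata add nothing beyond what the Jacobian rank already records. Steps~1 and~2, by contrast, are short; the one genuinely essential input is that the Cohen--Macaulay hypothesis forces the a priori bound $\operatorname{rank}\operatorname{Jac}(f)\le 2$ along $X$, without which the clean identity $X\cap V(J(f))=\operatorname{Sing}(X)$, and with it the whole statement, would break down.
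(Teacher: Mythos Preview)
The paper does not prove Theorem~\ref{tp3}; it is quoted from \cite{P1} without argument, so there is no ``paper's own proof'' to compare against. Your proposal must therefore be judged on its own merits.

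Your Steps~1 and~2 are correct and standard: the R\"uckert Nullstellensatz gives $(i)\Leftrightarrow(ii)$, and the Hilbert--Burch/Jacobian argument gives $X\cap V(J(f))=\operatorname{Sing}(X)$ cleanly. In Step~3, the forward implication $\operatorname{Sing}(X)\subseteq\{0\}\Rightarrow(iii)$ is fine: your observation that $\operatorname{rank}F(p)\le n-2$ forces $df_i(p)=0$ (via cofactor expansion) shows $F^{-1}(M^{n-1})\subseteq\operatorname{Sing}(X)$, so the deep strata are not hit off $0$, while over the open stratum smoothness is exactly transversality.

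The converse $(iii)\Rightarrow\operatorname{Sing}(X)\subseteq\{0\}$ as you have written it is not self-contained. Transversality of $F$ to a deep stratum $M^i\setminus M^{i-1}$ at some $p\ne 0$ with $i\le n-1$ does \emph{not} make $X$ smooth at $p$; on the contrary, transversality pulls back the singular structure of $M^n$, so $p$ would lie in $\operatorname{Sing}(X)$. What actually closes the loop here is the standing hypothesis in the theorem: $X$ is assumed to be an \emph{isolated} singular variety, so $\operatorname{Sing}(X)\subseteq\{0\}$ is given from the outset, and by your Step~2 condition~$(i)$ then holds automatically. Under that reading the three conditions are all satisfied and the theorem records useful equivalent criteria; your argument then supplies $(i)\Leftrightarrow(ii)$ and $(i)\Rightarrow(iii)$, which is all that is needed. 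You should make this reliance on the isolated-singularity hypothesis explicit rather than claiming a free-standing biconditional in Step~3.
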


\section{Main Theorem}

We are motivated by \cite{BRS} in establishing the following
result.

\begin{prop}\label{l1}
We assign positive integer weights $w_k$ to $x_k$, $k=1,\ldots,N$.
Suppose that $f_{ij}(x_1,\ldots,x_N)$ is weighted homogeneous of
degree $d_{ij}$, $1\leq i\leq m,\,1\leq j\leq n$, w.r.t. weights
$(w_1,\ldots,w_N)$. Suppose $F:(\C^N,0)\to M_{mn}$ is finitely
$\g$-determined and weighted homogeneous of type $(\id;\underline{w})$. Let $G:(\C^N,0)\to M_{mn}$ be a polynomial mapping $G=(g_{ij})$ such that $degree \,g_{ij}<d_{ij}$. Then,
$F+G:(\C^N,0)\to M_{mn}$ is finitely $\g$-determined. Moreover,
$$d_e(F,\g)\geq d_e(F+G,\g).$$
\end{prop}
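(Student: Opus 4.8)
The plan is to mimic the classical finite-determinacy argument for weighted homogeneous germs, adapted to the group $\g$ acting on matrices. The key tool is the equivalence in Theorem \ref{tp1}: $F$ is $\g$-finitely determined iff $\T\g F \supset \m_N^k M_{mn}(\OO_N)$ for some $k$. So the first step is to fix such a $k$ for $F$ and to understand how the perturbation $G$, whose entries $g_{ij}$ have ordinary degree strictly below the weighted degree $d_{ij}$ of $f_{ij}$, interacts with the filtration. The point is that every monomial appearing in $g_{ij}$ has $\underline{w}$-filtration strictly less than $d_{ij}$, while the generators of $\T\g F$ (the matrices $\partial F/\partial x_\lambda$, $R_{lk}(F)$, $C_{pq}(F)$, suitably multiplied by $\m_N$) are themselves weighted homogeneous of controlled type. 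This is the mechanism that lets a filtration/Nakayama argument go through.

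Next I would set up a one-parameter family. Introduce $F_t = F + tG$ for $t\in\C$ (or, better, rescale so that $F_1 = F+G$ and the family interpolates between $F$ and $F+G$), and show that $\T\g F_t \supset \m_N^k M_{mn}(\OO_N)$ for all $t$ in a neighbourhood of the relevant parameter values, \emph{uniformly}. This is where the hypothesis $\deg g_{ij} < d_{ij}$ is essential: the difference $\T\g F_t - \T\g F$ consists of terms of strictly higher filtration relative to the leading (weighted homogeneous) part, so a graded Nakayama lemma — applied module by module in the filtration — shows that the inclusion $\m_N^k M_{mn}(\OO_N)\subset \T\g F$ is preserved under the lower-order perturbation, for a possibly larger but finite $k$. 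Concretely: since $\T\g F$ contains $\m_N^k M_{mn}(\OO_N)$, for $t$ small the generators of $\T\g F$ are perturbed only by elements of filtration $> $ their leading filtration, and one checks that the leading forms of $\T\g F_t$ still generate the same quotient, hence $\T\g F_t$ also contains some $\m_N^{k'} M_{mn}(\OO_N)$. By Theorem \ref{tp1} this gives that $F+G = F_1$ is $\g$-finitely determined.

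For the codimension inequality $d_e(F,\g)\geq d_e(F+G,\g)$, I would use upper semicontinuity of $d_e(F_t,\g) = \dim_\C M_{mn}(\OO_N)/\T\g_e F_t$ along the family $F_t = F + tG$. The extended tangent space $\T\g_e F_t = J(F_t) + \OO_N\{R_{lk}(F_t), C_{pq}(F_t)\}$ varies algebraically in $t$, and its colength, when finite, is upper semicontinuous; so the generic value over $t$ is $\leq$ the value at any special $t$. The weighted homogeneity of $F$ makes $t=0$ (i.e. $F$ itself) the "most degenerate" point of the family — here the rescaling action of the weights shows that $F$ sits in the closure of the $\g_e$-orbit direction of all the $F+tG$, so $d_e(F,\g) \geq d_e(F+tG,\g)$ for generic $t$, and then a further argument (the family $F + tG$ for $t\neq 0$ is all $\g$-equivalent, by rescaling coordinates $x_k \mapsto \lambda^{w_k} x_k$ and absorbing the scalar into $L, R$) shows the value is the same for all $t\neq 0$, in particular for $t=1$. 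Thus $d_e(F,\g)\geq d_e(F+G,\g)$.

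The main obstacle I expect is making the uniform Nakayama step rigorous: one must control $\T\g F_t$ not just for $t$ near $0$ but actually establish the inclusion $\m_N^{k'} M_{mn}(\OO_N) \subset \T\g F_t$ with the \emph{same} $k'$ for all $t$ in the closed disc, which requires handling the family over $\OO_N$ (or $\OO_N[t]$, or $\OO_{N}\{t\}$) as a module and invoking a relative/parametrized version of Nakayama together with coherence, so that finiteness at one fibre propagates. Equivalently, one argues via Theorem \ref{tp1}(iii), showing $d_e(F+tG,\g) < \infty$ for all $t$ using the filtration estimate directly. Comparing with the argument of \cite{BRS} in the function-germ case, the extra bookkeeping here is only the non-commutative/matrix structure of the generators $R_{lk}(F), C_{pq}(F)$, which behave well under the weighted scaling precisely because of the defining relations $d_{ij}-d_{ik}=d_{lj}-d_{lk}$ of a weighted homogeneous matrix — so the leading forms of $R_{lk}(F_t)$ and $C_{pq}(F_t)$ are again $R_{lk}(F)$ and $C_{pq}(F)$. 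That observation is what makes the whole scheme close up.
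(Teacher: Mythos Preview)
Your filtration/Nakayama step has the direction backwards, and this is a genuine gap. Since $\deg g_{ij} < d_{ij}$, the entries of $G$ (and of $\partial G/\partial x_\lambda$, $R_{lk}(G)$, $C_{pq}(G)$) have \emph{lower} $\underline{w}$-filtration than the corresponding entries of $F$. So in each generator of $\T\g F_t$ the contribution from $G$ is the \emph{leading} (lowest-filtration) part, not a higher-order correction; your claim that ``the leading forms of $R_{lk}(F_t)$ and $C_{pq}(F_t)$ are again $R_{lk}(F)$ and $C_{pq}(F)$'' is false. Nakayama-type arguments let you discard higher-order perturbations of generators, not lower-order ones, so the step ``the inclusion $\m_N^k M_{mn}(\OO_N)\subset \T\g F$ is preserved under the lower-order perturbation'' is unjustified as written.

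The correct mechanism is precisely the rescaling you invoke only for the codimension inequality, and that is exactly what the paper does for the whole proof. One takes $h_s(x)=(s^{-w_1}x_1,\ldots,s^{-w_N}x_N)$ and diagonal matrices $\psi_s=\mathrm{diag}(s^{a_1},\ldots,s^{a_m})$, $\phi_s=\mathrm{diag}(s^{b_1},\ldots,s^{b_n})$; the relations $d_{ij}-d_{ik}=d_{lj}-d_{lk}$ are used to solve $a_i+b_j=d_{ij}$ for all $i,j$, so that $\psi_s\,h_s^*(F)\,\phi_s=F$. Then $F_s:=\psi_s\,h_s^*(F+G)\,\phi_s=F+\psi_s\,h_s^*(G)\,\phi_s$, and the $(i,j)$-entry of the correction is $s^{d_{ij}}g_{ij}(s^{-w}x)$, which tends to $0$ as $s\to 0$ because every monomial in $g_{ij}$ has weight $<d_{ij}$. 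Since $F$ is $\g$-finitely determined (an open condition by upper semicontinuity of $d_e$), $F_s$ is $\g$-finitely determined for small $s\neq 0$; but $F_s$ is $\g$-equivalent to $F+G$ by construction, so $F+G$ is $\g$-finitely determined, and $d_e(F+G,\g)=d_e(F_s,\g)\le d_e(F,\g)$. In short: promote your ``further argument'' to the main argument and drop the Nakayama step entirely.
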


\begin{proof}
Consider $\C^N$ with a fixed coordinate system $x_1,\ldots,x_N$. We
assume that the weighted homogeneity type
$\underline{w}=(w_1,\ldots,w_N)$ is fixed. Take
\[
F= \left(
  \begin{array}{cccc}
    f_{11}(x) & f_{12}(x)& \ldots & f_{1n}(x) \\
        \vdots & \vdots &  & \vdots \\
    f_{m1}(x) & f_{m2}(x) & \ldots & f_{mn}(x) \\
  \end{array}
\right),
\]
where $f_{ij}(x)$ is weighted homogeneous of degree $d_{ij}$, $1\leq
i\leq m,\,1\leq j\leq n$, w.r.t. weights $(w_1,\ldots,w_N)$. Let
$h:(\C^N,0)\to(\C^N,0)$ be a map and $s\in\C^*$. We define the
deformation $h_s:(\C^N,0)\to(\C^N,0)$ of $h$ by
$$h_s(x_1,\ldots,x_N)=(s^{-w_1}x_1,\ldots,s^{-w_N}x_N),\,\,s\in\C^*.$$
Then,
\[
h_s^*(F)= \left(
  \begin{array}{cccc}
    s^{-d_{11}}f_{11}(x) & s^{-d_{12}}f_{12}(x)& \ldots & s^{-d_{1n}}f_{1n}(x) \\
        \vdots & \vdots &  & \vdots \\
    s^{-d_{m1}}f_{m1}(x) & s^{-d_{m2}}f_{m2}(x) & \ldots & s^{-d_{mn}}f_{mn}(x) \\
  \end{array}
\right).
\]
Let
\begin{center}
$\psi_s= \left(
  \begin{array}{cccc}
    s^{a_{11}} & 0 & \ldots & 0 \\
    \vdots & \vdots &      & \vdots \\
    0 & s^{a_{22}} & \ldots & 0 \\
        \vdots & \vdots & \ddots & \vdots \\
    0 & 0 & \ldots & s^{a_{mm}} \\
  \end{array}
\right)$ and $\phi_s= \left(
  \begin{array}{cccc}
    s^{b_{11}} & 0 & \ldots & 0 \\
    \vdots & \vdots &      & \vdots \\
    0 & s^{b_{22}} & \ldots & 0 \\
        \vdots & \vdots & \ddots & \vdots \\
    0 & 0 & \ldots & s^{b_{nn}} \\
  \end{array}
\right)$
\end{center}
Now, we compute
\[
\psi_s h_s^*(F)\phi_s= \left(
  \begin{array}{cccc}
    s^{a_{11}-d_{11}+b_{11}}f_{11}(x) & s^{a_{11}-d_{12}+b_{22}}f_{12}(x)& \ldots & s^{a_{11}-d_{1n}+b_{nn}}f_{1n}(x) \\
    \vdots & \vdots &      & \vdots \\
    s^{a_{22}-d_{21}+b_{11}}f_{21}(x) & s^{a_{22}-d_{22}+b_{22}}f_{22}(x) & \ldots & s^{a_{22}-d_{2n}+b_{nn}}f_{2n}(x) \\
        \vdots & \vdots & \ddots & \vdots \\
    s^{a_{mm}-d_{m1}+b_{11}}f_{m1}(x) & s^{a_{mm}-d_{m2}+b_{22}}f_{m2}(x) & \ldots & s^{a_{mm}-d_{mn}+b_{nn}}f_{mn}(x) \\
  \end{array}
\right)
\]
$\,\,\,\,\,\,\,\,\,\,\,\,\,\,\,\,\,\,\,\,\,\,\,\,\,\,\,\,\,=(s^{a_{ii}-d_{ij}+b_{jj}}f_{ij}(x))$\\
We want to find conditions on $\psi_s$ and $\phi_s$ such that $\psi_s h_s^*(F)\phi_s=F$.
Since $F$ is a weighted homogeneous polynomial map of filtration
$\id$, therefore, by definition,
$$d_{ij}-d_{ik}=d_{lj}-d_{lk}\,\,\,\forall\,\,\,1\leq i,\,l\leq m,\,1\leq j,\,k\leq n.$$
Then, the relations
$$a_{ii}-d_{ij}+b_{jj}=0,\,a_{ii}-d_{il}+b_{ll}=0,\,a_{kk}-d_{kj}+b_{jj}=0,\,a_{kk}-d_{kl}+b_{ll}=0$$
hold if
$$b_{jj}-b_{ll}=d_{ij}-d_{il},\,b_{jj}-b_{ll}=d_{kj}-d_{kl}$$
and
$$a_{ii}-a_{kk}=d_{ij}-d_{kj},\,a_{ii}-a_{kk}=d_{il}-d_{kl}.$$

By hypothesis, $F$ is finitely $\g$-determined, therefore we get that
$$F_s=\psi_s h_s^*(F+G)\phi_s=F+\psi_s h_s^*(G)\phi_s$$
is also finitely $\g$-determined for sufficiently small $s$. But
$F_s$ is $\g$-equivalent to $F+G$, so $F+G$ is finitely
$\g$-determined. Moreover, by the upper semicontinuity of the codimension,
$$d_e(F,\g)\geq d_e(F_s,\g).$$
\end{proof}

\begin{prop}\label{p1}
 Let $L_{(a,\lambda)}(x)$ be the $m\times n$ matrix whose entries are
 $$l_{(a(i,j),\lambda)}(x)=\sum_{\lambda=1}^{N}a(i,j)_{\lambda}x_{\lambda}^d$$
 homogeneous polynomials of degree $d\in \mathbb{N}^*$ with $1\leq i\leq m$; $1\leq j\leq n$, $1\leq\lambda\leq N$.
 Then, for almost all values of the parameters $a=(a(i,j)_{\lambda})$, $L_{(a,\lambda)}:(\C^N,0)\to M_{mn}$ is $\g$-finitely determined.
\end{prop}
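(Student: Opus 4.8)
The plan is to exhibit a single choice of parameters $a^0 = (a^0(i,j)_\lambda)$ for which $L_{(a^0,\lambda)}$ is $\g$-finitely determined, and then to upgrade this to a statement about almost all $a$ by a semicontinuity/algebraicity argument. For the first task I would make the matrix as "generic" as possible: choose the parameters so that, after permuting the $N$ variables and rescaling, as many entries as the dimensions allow become distinct pure powers $x_\lambda^d$ with no repetitions, and the remaining entries vanish. With such a choice $L_{(a^0,\lambda)}$ is weighted homogeneous (all $x_\lambda$ carrying weight $1$, so $\id$ is the constant matrix $d$), and one can compute the extended tangent space $\T\g_e L_{(a^0,\lambda)}$ directly: the partial derivatives $\partial L/\partial x_\lambda$ contribute monomials $x_\lambda^{d-1}$ in the appropriate slots, while the row and column operations $R_{lk}(L)$, $C_{pq}(L)$ move the pure powers $x_\mu^d$ into every slot of a fixed row or column. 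The hard part is then the bookkeeping that shows these generators already span $M_{mn}(\OO_N)$ modulo some power $\m_N^k M_{mn}(\OO_N)$, equivalently that $d_e(L_{(a^0,\lambda)},\g)<\infty$; by Theorem \ref{tp1}(iii)$\Leftrightarrow$(i) this is exactly $\g$-finite determinacy.

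Once one such $a^0$ is in hand, I would argue that the set of "bad" parameters is contained in a proper Zariski-closed subset of the affine space $\C^{mnN}$ of all parameters $a$. The mechanism is upper semicontinuity of $\g$-codimension in families, as already invoked in the proof of Proposition \ref{l1}: consider the family $L_{(a,\lambda)}$ parametrized by $a$, and note that $d_e(L_{(a,\lambda)},\g)$ is finite on a (Zariski-)open set, which is nonempty because it contains $a^0$. Equivalently, by Theorem \ref{tp1}(ii) the condition $\T\g L_{(a,\lambda)} \supset \m_N^k M_{mn}(\OO_N)$ for a suitable fixed $k$ is an open condition on the coefficient jets, cut out by the non-vanishing of certain determinantal minors in the $a(i,j)_\lambda$; these minors are polynomials in $a$, not identically zero since they are nonzero at $a^0$, so they vanish only on a proper algebraic subset. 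Hence for all $a$ outside that subset — i.e.\ for almost all $a$ — the matrix $L_{(a,\lambda)}$ is $\g$-finitely determined.

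The main obstacle is the explicit verification at $a^0$: one must pin down a genuinely usable normal form for the special matrix and then check that the combination of Jacobian generators $x_\lambda^{d-1}$ (in the diagonal-type slots) together with the row/column replacements of the pure powers $x_\mu^d$ really does generate a finite-codimensional submodule. When $N \geq \max\{m,n\}$ this is cleanest, since one can arrange the leading term of $L_{(a^0,\lambda)}$ to be, up to the $\h$-action, a matrix whose entries are independent variables raised to the $d$-th power, for which the associated determinantal variety visibly has the right EIDS structure; for small $N$ one instead relies on the row/column operations to compensate. I expect no difficulty in the semicontinuity step, which is standard and already used above, so essentially all of the work is in producing the one good example and reading off finiteness of its extended $\g$-codimension.
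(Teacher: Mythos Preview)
Your approach is quite different from the paper's and contains a genuine gap. The paper does not construct any explicit $a^0$. Instead it considers the total family map $\mathcal{L}:\C^N\times\mathcal{A}\to M_{mn}$, $(x,a)\mapsto L_{(a,\lambda)}(x)$, and observes that for every $x\neq 0$ some $x_\lambda\neq 0$, so the partial derivatives $\partial\mathcal{L}/\partial a(i,j)_\lambda$ (each equal to $x_\lambda^d$ in the $(i,j)$-entry and zero elsewhere) already span $M_{mn}$. Thus $\mathcal{L}$ is a submersion off $\{x=0\}$, and the Basic Transversality Theorem yields that for almost all $a$ the map $\mathcal{L}_a=L_{(a,\lambda)}$ is transverse to each stratum $M_{mn}^i\setminus M_{mn}^{i-1}$ on a punctured neighbourhood of the origin. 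By Theorem~\ref{tp2} this is precisely $\g$-finite determinacy.

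The gap in your plan is the choice of $a^0$. You propose taking the entries to be distinct pure powers $x_\lambda^d$ (with zeros in the remaining slots), asserting that the associated determinantal variety ``visibly has the right EIDS structure.'' For $d>1$ this is false, and the paper itself records the failure: in the remark following Proposition~\ref{p3} the $2\times 2$ matrix with entries $x_1^d,x_2^d,x_3^d,x_4^d$ is shown \emph{not} to define an EIDS (along $x_1=x_2=0$, for instance, the map lands in the rank-one stratum but is not transverse to it, since the determinant $x_1^dx_4^d-x_2^dx_3^d$ has vanishing gradient there). So your candidate $a^0$ does not work, and the semicontinuity step---correct in principle---never gets off the ground. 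Producing a single explicit $a^0$ for general $d$ is in fact the non-trivial business of Section~4; the parametric transversality argument bypasses this difficulty entirely.
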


\begin{proof}
For any $d\in \mathbb{N}^*$, and for all $(i,j)$, $1\leq i\leq m$; $1\leq j\leq n$, $1\leq\lambda\leq N$,
 let $l_{(a(i,j),\lambda)}(x)$ denote the following homogeneous polynomial of degree $d$:
 $$l_{(a(i,j),\lambda)}(x)=\sum_{\lambda=1}^{N}a(i,j)_{\lambda}x_{\lambda}^d.$$
 Let $\mathcal{A}$ be the space of parameters $a=(a(i,j)_{\lambda})$, $1\leq i\leq m$, $1\leq j\leq n$ and $1\leq \lambda\leq N$. It is a
vector space of dimension $Nmn$. Let
$$\mathcal{L}:\mathbb{C}^N\times\mathcal{A}\to M_{mn},$$
$$(x,a)\mapsto \mathcal{L}(x,a)=L_{(a,\lambda)}(x)=$$
\[
\left(
  \begin{array}{cccc}
    a_{11}^1 x_1^d+\ldots+a_{11}^N x_N^d & \ldots & a_{1n}^1 x_1^d+\ldots+a_{1n}^N x_N^d \\
    \vdots &     & \vdots \\
    a_{m1}^1 x_1^d+\ldots+a_{m1}^N x_N^d & \ldots & a_{mn}^1 x_1^d+\ldots+a_{mn}^N x_N^d \\
  \end{array}
\right).
\]
So the entries of the $m\times n$ matrix
$L_{(a,\lambda)}(x)$ are $$l_{(a(i,j),\lambda)}(x)=\sum_{\lambda=1}^N a(i,j)_{\lambda}x_{\lambda}^d.$$
The partial derivative $\frac{\partial \mathcal{L}}{\partial a(i,j)_{\lambda}}=x_{\lambda}^d$.
Then, for all $x\neq 0$, the map $\mathcal{L}$ is a submersion. So, for every submanifold
$\mathcal{S}\subset M_{mn}$, we can apply the Basic Transversality Theorem to get that for almost all
$a\in\mathcal{A}$, $\mathcal{L}_a:\mathbb{C}^N\to M_{mn}$ is transverse to $\mathcal{S}$, see \cite{GG}. In particular,
for $\mathcal{S}=S^i=M_{mn}^i\backslash M_{mn}^{i-1}$ and the result follows.
\end{proof}

\subsection{$\g$-Determinacy holding in general}

As an application of Proposition \ref{l1} and Proposition \ref{p1}, we show that the property of finite $\g$-determinacy holds in general in $M_{mn}(\OO_N)$. Following Wall \cite{W}, Sec. 5, we introduce first the notion of a property holding in general for map germs $f:(\C^n,0)\to(\C^p,0)$.

If $A_r\subset J^r(n,p)$ are algebraic sets with $A_{r+1}\subset\pi^{-1}(A_r)$, where $\pi:J^{r+1}(n,p)\to J^r(n,p)$ is the canonical projection, then the set $A$ of map germs $f:(\C^n,0)\to(\C^p,0)$ with $J^r(f)\in A_r$ for all $r$ is said to be {\it proalgebraic}. We know that the codimension of $A_r$ is less than or equal to the codimension of $A_{r+1}$. We write $codim\,A=\lim_{r\to\infty}\,codim\,A_r$. A property of map germs which holds for all except those in a proalgebraic set of infinite codimension is said to {\it hold in general}. The codimension of the proalgebraic set $A$ is infinity if and only if, for every $r$-jet $z\in J^r(n,p)$, we can find $f\notin A$ with $j^r(f)=z$.

Now, we establish that $\g$-determinacy is a property holding in general in $M_{mn}(\OO_N)$.

\begin{thm}\label{t1}
Finite $\g$-determinacy holds in general in $M_{mn}(\OO_N)$.
\end{thm}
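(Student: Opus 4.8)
The plan is to verify the equivalent condition for infinite codimension of the proalgebraic set of non-$\g$-determined matrices: for every $r$-jet $z \in J^r(N, mn)$ we must exhibit some $F \notin A$ (i.e., $F$ is $\g$-finitely determined) with $j^r F(0) = z$. Given such a jet $z$, I would write $z$ as the $r$-jet of a polynomial matrix $P = (p_{ij})$ with $\deg p_{ij} \le r$. The idea is to add to $P$ a generic matrix of homogeneous forms of a higher degree $d$.

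First I would fix $d > r$ and set $L = L_{(a,\lambda)}$ to be the matrix with entries $\sum_{\lambda=1}^N a(i,j)_\lambda x_\lambda^d$ as in Proposition \ref{p1}. By that proposition, for almost all choices of the parameter $a$ the matrix $L \colon (\C^N,0) \to M_{mn}$ is $\g$-finitely determined; fix one such $a$. This $L$ is weighted homogeneous of type $(\id; \underline{w})$ with the constant weight vector $\underline{w} = (1,\dots,1)$ and constant filtration matrix $\id = (d)_{ij}$ — the required relations $d_{ij} - d_{ik} = d_{lj} - d_{lk}$ hold trivially since all $d_{ij} = d$. Now apply Proposition \ref{l1} with $F = L$ and $G = P$: since every entry $p_{ij}$ has degree $\le r < d$, the hypothesis $\deg g_{ij} < d_{ij}$ is satisfied, and we conclude that $F := L + P \colon (\C^N,0) \to M_{mn}$ is $\g$-finitely determined. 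Finally, because $d > r$, every monomial of $L$ has degree exceeding $r$, so $j^r F(0) = j^r P(0) = z$. Thus $F \notin A$ and $j^r F(0) = z$, which is exactly what is needed.

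Since the $r$-jet $z$ was arbitrary, the proalgebraic set $A$ of matrices that are not $\g$-finitely determined has infinite codimension, so finite $\g$-determinacy holds in general in $M_{mn}(\OO_N)$.

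I do not anticipate a serious obstacle here, as the two propositions do the real work; the one point requiring a little care is confirming that $L_{(a,\lambda)}$ genuinely satisfies the weighted-homogeneity hypothesis of Proposition \ref{l1} (it does, with unit weights and constant degree matrix) so that the proposition applies with the perturbation $G = P$ of strictly lower degree. One should also note that the definition of ``hold in general'' is formulated in the excerpt for germs $f \colon (\C^n,0) \to (\C^p,0)$; applying it to $M_{mn}(\OO_N) \cong \OO_N^{mn}$ means identifying a matrix germ $F$ with the map germ $(\C^N,0) \to (\C^{mn},0)$ given by its entries (after translating so $F(0) = 0$, which changes nothing about $\g$-finite determinacy or about $r$-jets for $r \ge 1$), and reading the jet spaces $J^r(N,mn)$ accordingly.
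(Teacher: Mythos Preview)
Your proposal is correct and follows essentially the same approach as the paper: given an arbitrary $r$-jet, you realize it by a polynomial matrix $P$ of degree $\le r$, then invoke Proposition~\ref{p1} to obtain a $\g$-finitely determined homogeneous matrix $L$ of degree $d>r$, and finally apply Proposition~\ref{l1} to conclude that $P+L$ is $\g$-finitely determined with the prescribed $r$-jet. The paper's proof is the same argument with slightly different notation (their $G$, $F$, $H$ play the roles of your $P$, $L$, $L+P$), and your added remarks about verifying the weighted-homogeneity hypothesis and about identifying $M_{mn}(\OO_N)$ with map germs into $\C^{mn}$ are points the paper leaves implicit.
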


\begin{proof}
Let $G(x)=(g_{ij}(x))$ be a $m\times n$ matrix, where $g_{ij}$ are polynomials of degree $\leq r$.
Following Wall \cite{W}, Sec. 5, to prove that $\g$- finite determinacy holds in general in $M_{mn}(\OO_N)$, we need to find $m\times n$-matrix $H(x)=(h_{ij}(x))$ such that $j^r(H)=G$, and $H$ is  $\g$-finitely determined. By Proposition \ref{p1}, we can take $F(x)$ with homogeneous entries of degree $d\geq r+1$  and $\g$-finitely determined. Then, we take $H=G+F$. So, $j^r(H)=G$ and applying Proposition \ref{l1} we get result.
\end{proof}

The following corollary is a direct consequence of above result.

\begin{cor}\label{c1}
Almost all determinantal varieties of type $(m,n,t)$ with $1\leq t\leq \min\{m,n\}$ are EIDS.
\end{cor}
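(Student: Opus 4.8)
The plan is to deduce Corollary \ref{c1} directly from Theorem \ref{t1} via the characterization of EIDS given by Theorem \ref{tp2}. First I would recall that, by Theorem \ref{tp2}, a germ $F:(\C^N,0)\to M_{mn}$ is $\g$-finitely determined if and only if the determinantal variety $X^t=F^{-1}(M_{mn}^t)$ is an EIDS for \emph{all} $1\leq t\leq\min\{m,n\}$. In particular, $\g$-finite determinacy of $F$ implies that $X^t$ is an EIDS for the specific value of $t$ under consideration. Thus the set of matrices $F\in M_{mn}(\OO_N)$ whose associated variety of type $(m,n,t)$ fails to be an EIDS is contained in the set of matrices that are not $\g$-finitely determined.

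Next I would invoke Theorem \ref{t1}, which asserts precisely that finite $\g$-determinacy holds in general in $M_{mn}(\OO_N)$; by the definition of ``holding in general'' recalled before Theorem \ref{t1}, this means the complement of the $\g$-finitely determined matrices is a proalgebraic set of infinite codimension. Since the set of matrices failing to define an EIDS of type $(m,n,t)$ is a subset of this complement, and a subset of a proalgebraic set of infinite codimension is again contained in a proalgebraic set of infinite codimension, the property ``$X^t=F^{-1}(M_{mn}^t)$ is an EIDS'' holds in general as well. Phrasing this as ``almost all determinantal varieties of type $(m,n,t)$ are EIDS'' then gives the statement.

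The one point requiring a little care — and the main (mild) obstacle — is the logical direction in Theorem \ref{tp2}: that theorem characterizes $\g$-finite determinacy by the EIDS condition holding for \emph{all} $t$ simultaneously, whereas the corollary concerns a fixed type $(m,n,t)$. This is not actually a gap, since ``EIDS for all $t$'' implies ``EIDS for the given $t$'', so $\g$-finite determinacy still suffices; but I would state this implication explicitly rather than citing Theorem \ref{tp2} as an equivalence. With that observation in place the argument is immediate: every $\g$-finitely determined $F$ yields an EIDS of type $(m,n,t)$, the set of $\g$-finitely determined matrices is the complement of a proalgebraic set of infinite codimension by Theorem \ref{t1}, and therefore so is (a superset of) the set of matrices defining EIDS of type $(m,n,t)$, which is exactly the assertion that EIDS of type $(m,n,t)$ hold in general.
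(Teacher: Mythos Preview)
Your proposal is correct and is essentially the paper's own argument: the paper simply states that the corollary is a direct consequence of Theorem \ref{t1}, and your write-up makes explicit the implicit use of Theorem \ref{tp2} (in the easy direction ``$\g$-finitely determined $\Rightarrow$ EIDS for each $t$'') together with the infinite-codimension complement coming from Theorem \ref{t1}. Your extra care about the quantifier over $t$ is a welcome clarification but does not depart from the paper's approach.
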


\section{Examples}

Linear determinantal varieties have been intensively studied by many authors, see for instance \cite{DRS}, \cite{EH} and \cite{RS}.
We discuss here a method to get examples of homogeneous EIDS of arbitrarily high degree from a special class of linear EIDS.

Let $F:\C^N\to M_{mn}$ be given by
\[
F(x)=
\left(
  \begin{array}{cccc}
    a_{11}^1x_1^d+\ldots+a_{11}^Nx_N^d & \ldots & a_{1n}^1x_1^d+\ldots+a_{1n}^Nx_N^d \\
    \vdots &     & \vdots\\
    a_{m1}^1x_1^d+\ldots+a_{m1}^Nx_N^d & \ldots & a_{mn}^1x_1^d+\ldots+a_{mn}^Nx_N^d  \\
  \end{array}
\right).
\]
We can associate to $F(x)$ a $m\times n$ linear matrix with variables $u_{\lambda}=x_{\lambda}^d$ for $\lambda=1,\ldots,N$, given by
\[
L(u)=
\left(
  \begin{array}{cccc}
    a_{11}^1u_1+\ldots+a_{11}^Nu_N & \ldots & a_{1n}^1u_1+\ldots+a_{1n}^Nu_N \\
    \vdots &     & \vdots\\
    a_{m1}^1u_1+\ldots+a_{m1}^Nu_N & \ldots & a_{mn}^1u_1+\ldots+a_{mn}^Nu_N  \\
  \end{array}
\right).
\]
We denote by $F_{\lambda}$ and $L_{\lambda}$ the matrices $\frac{\partial F}{\partial x_{\lambda}}$ and $\frac{\partial L}{\partial u_{\lambda}}$ (respectively), $\lambda=1,\ldots,N$.

We establish now the following result.

\begin{prop}\label{p3}
With the above notations, we suppose that the following condition holds for the linear EIDS $L$:
\begin{equation}\label{eq1}
\OO_N\{u_{\lambda} L_{\lambda}(u),R_{lk}(u),C_{pq}(u)\}\supseteq m_N^r\OO_N^{mn},
\end{equation}
for some natural number $r$. Then, $F$ is an EIDS.
\end{prop}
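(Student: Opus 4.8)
The plan is to push the hypothesis \eqref{eq1} on the linear matrix $L$ forward through the substitution $u_\lambda=x_\lambda^d$ and to deduce from it that $\T\g F$ contains a power of the maximal ideal $\m_N\subset\OO_N$. By Theorem \ref{tp1} this makes $F$ finitely $\g$-determined, and then by Theorem \ref{tp2} the varieties $X^t=F^{-1}(M_{mn}^t)$ are EIDS for all $1\le t\le\min\{m,n\}$, which is what ``$F$ is an EIDS'' should mean here.

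First I would record the relation between $F$ and $L$. Writing $u(x)=(x_1^d,\ldots,x_N^d)$ we have $F(x)=L(u(x))$; since the entries of $L$ are linear in $u$, each $L_\lambda=\partial L/\partial u_\lambda$ is the \emph{constant} matrix $(a_{ij}^\lambda)$, whereas $\partial F/\partial x_\lambda=d\,x_\lambda^{d-1}L_\lambda$. Hence $x_\lambda^d L_\lambda=\tfrac1d\,x_\lambda\,\partial F/\partial x_\lambda\in\m_N J(F)$, and also $R_{lk}(F)=R_{lk}(L)\big|_{u=u(x)}$, $C_{pq}(F)=C_{pq}(L)\big|_{u=u(x)}$. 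Therefore the $\OO_N$-submodule of $M_{mn}(\OO_N)$ generated by $\{x_\lambda^d L_\lambda,\,R_{lk}(F),\,C_{pq}(F)\}$ is contained in $\T\g F$.

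Next I would perform the base change. Let $\phi\colon\OO_N^{(u)}\to\OO_N^{(x)}$ be the substitution homomorphism $u_\lambda\mapsto x_\lambda^d$ between the two copies of $\OO_N$, the source being the one in which \eqref{eq1} lives. Every $h\in\OO_N^{(x)}$ has a unique expansion $h=\sum_{0\le\alpha_i<d}x^\alpha h_\alpha(x^d)$, so $\OO_N^{(x)}$ is a free, hence flat, $\OO_N^{(u)}$-module of rank $d^N$ via $\phi$, and $M_{mn}(\OO_N^{(x)})=\OO_N^{(x)}\otimes_{\OO_N^{(u)}}M_{mn}(\OO_N^{(u)})$. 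Applying $\OO_N^{(x)}\otimes_{\OO_N^{(u)}}-$ to the inclusion \eqref{eq1}, which is legitimate by flatness, and using that $\phi$ sends the generators $u_\lambda L_\lambda,\,R_{lk}(L),\,C_{pq}(L)$ to $x_\lambda^d L_\lambda,\,R_{lk}(F),\,C_{pq}(F)$ and that $\phi(\m_N)\OO_N^{(x)}=(x_1^d,\ldots,x_N^d)=:I$, I obtain
$$\OO_N\{x_\lambda^d L_\lambda,\,R_{lk}(F),\,C_{pq}(F)\}\ \supseteq\ I^r\,M_{mn}(\OO_N).$$
Equivalently, one can avoid tensor products: expand an arbitrary element of $I^r M_{mn}(\OO_N)$ in the $\C\{x_1^d,\ldots,x_N^d\}$-basis $\{x^\alpha:0\le\alpha_i<d\}$, apply \eqref{eq1} to each coefficient, and reassemble.

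Combining the previous two steps gives $\T\g F\supseteq I^r M_{mn}(\OO_N)$. Since any monomial of degree $>N(d-1)$ is divisible by some $x_i^d$, we have $\m_N^{N(d-1)+1}\subseteq I$, whence $\T\g F\supseteq\m_N^{k}M_{mn}(\OO_N)$ with $k=r\bigl(N(d-1)+1\bigr)$. Theorem \ref{tp1} then gives that $F$ is finitely $\g$-determined, and Theorem \ref{tp2} that $X^t=F^{-1}(M_{mn}^t)$ is an EIDS for every $1\le t\le\min\{m,n\}$. The step that calls for real care is the transfer across $u_\lambda\mapsto x_\lambda^d$: one must check that $\OO_N^{(x)}$ is genuinely free of rank $d^N$ over its subring $\C\{x_1^d,\ldots,x_N^d\}$ in the analytic category, that flat base change preserves the submodule inclusion \eqref{eq1}, and that the generators together with the ideal $\m_N^r$ are carried to exactly the objects claimed; once this bookkeeping is secured, only the elementary estimate $\m_N^{N(d-1)+1}\subseteq I$ and the two cited theorems remain.
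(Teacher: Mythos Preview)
Your argument is correct and follows essentially the same route as the paper: relate $x_\lambda F_\lambda$ to $u_\lambda L_\lambda$ via the chain rule, push hypothesis (\ref{eq1}) through the substitution $u_\lambda\mapsto x_\lambda^d$, and conclude that $T\g F\supseteq\m_N^{s}M_{mn}(\OO_N)$, whence finite $\g$-determinacy and the EIDS property by Theorems \ref{tp1} and \ref{tp2}. The only difference is cosmetic, in the transfer step: the paper simply extracts the specific elements $u_\lambda^{r}E_{ij}$ from (\ref{eq1}), substitutes to get $x_\lambda^{dr}E_{ij}\in T\g F$, and then observes that $(x_1^{dr},\ldots,x_N^{dr})$ contains some $\m_N^{s}$, whereas you transfer the full inclusion at once via flat base change (or the equivalent free-basis expansion over $\C\{x_1^d,\ldots,x_N^d\}$), yielding the sharper intermediate statement $T\g F\supseteq I^{r}M_{mn}(\OO_N)$ and the explicit exponent $k=r(N(d-1)+1)$.
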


\begin{proof}
The $\g_e$-tangent space of $F$ is the $\OO_N$ module generated by $\{F_{\lambda}(x), R_{lk}(x), C_{pq}(x)\}$, $\lambda=1,\ldots, N$. By the chain rule, we have that $F_{\lambda}(x)=d x_{\lambda}^{d-1}L_{\lambda}(u)$. It implies that $x_{\lambda} F_{\lambda}(x)=d u_{\lambda} L_{\lambda}(u)$. Notice that the matrices $R_{lk}$ and $C_{pq}$ of $F$ are obtained from corresponding ones of $L$ just by replacing $u_{\lambda}$ by $x_{\lambda}^d$.

Now, by hypothesis $L$ satisfies condition (\ref{eq1}). Then, if $E_{ij}$ denotes the $m\times n$ matrix with $1$ in the $ij$-entry and zeros elsewhere, then condition (\ref{eq1}) implies
\begin{equation}\label{eq2}
u_{\lambda}^r E_{ij}\in \OO_N\{u_{\lambda} L_{\lambda}, R_{lk}, C_{pq}\},
\end{equation}
for all $1\leq \lambda\leq N$ and $1\leq i\leq m$, $1\leq j\leq n$, for some $r\in \mathbb{N}^*$. It implies that
\begin{equation}\label{eq3}
x_{\lambda}^{dr}E_{ij}\in T\g F.
\end{equation}
As $T\g F$ is an $\OO_N$-module, the condition (\ref{eq3}) will imply that $T\g F\supseteq m_N^s\OO_N^{mn}$ for some natural number $s=s(r)$.
\end{proof}

\begin{rk}
Condition (\ref{eq1}) is stronger than the condition that $L$ defines an EIDS. For example,
$L(u)=\left(
  \begin{array}{cccc}
    u_1 & u_2 \\
    u_3 & u_4  \\
  \end{array}
\right)$
defines an EIDS in $\C^4$ because the matrices $\{L_{\lambda}\}$, $\lambda=1,\ldots,4$, generate the whole space $\OO_N^{mn}$. However condition (\ref{eq1}) does not hold for $L$. The corresponding matrix
$
F(x)=
\left(
  \begin{array}{cccc}
    x_1^d & x_2^d \\
    x_3^d & x_4^d  \\
  \end{array}
\right)$
does not define an EIDS.
\end{rk}

In the following examples, we give determinantal varieties defining EIDS.

\begin{ex}
The matrix $F:(\C^4,0)\to M_{22}$ is given by
\[
F(x)=
\left(
  \begin{array}{cccc}
    x_1^d-x_2^d & x_3^d+x_4^d \\
    x_3^d-x_4^d & x_1^d+x_2^d  \\
  \end{array}
\right)
=
\left(
  \begin{array}{cccc}
    u_1-u_2 & u_3+u_4 \\
    u_3-u_4 & u_1+u_2  \\
  \end{array}
\right),
\]
where $x_i^d=u_i$ with $i=1,\ldots,4$.
One can see that the variety $X=F^{-1}(M_{22}^2)$ has isolated singularities and therefore by Theorem \ref{tp3} is finitely determined. However we discuss here how to apply Proposition \ref{p3} to obtain the result. To prove that $X$ defines an EIDS, it is sufficient to prove that $L(u)$ satisfies (\ref{eq1}). The generators are\\
$\langle
\left(
  \begin{array}{cccc}
    u_1 & 0 \\
      0 & u_1 \\
  \end{array}
\right),
\left(
  \begin{array}{cccc}
    -u_2 & 0 \\
    0 & u_2 \\
  \end{array}
\right),
\left(
  \begin{array}{cccc}
    0 & u_3 \\
    u_3 & 0 \\
  \end{array}
\right),
\left(
  \begin{array}{cccc}
    0 & u_4 \\
    -u_4 & 0 \\
  \end{array}
\right),\\
\left(
  \begin{array}{cccc}
   u_1-u_2 & 0 \\
    u_3-u_4 & 0 \\
  \end{array}
\right),
\left(
  \begin{array}{cccc}
    u_3+u_4 & 0 \\
    u_1+u_2 & 0 \\
  \end{array}
\right),
\left(
  \begin{array}{cccc}
    0 & u_1-u_2 \\
    0 & u_3-u_4 \\
  \end{array}
\right),
\left(
  \begin{array}{cccc}
   0 & u_3+u_4 \\
    0 & u_1+u_2 \\
  \end{array}
\right),\\
\left(
  \begin{array}{cccc}
    u_1-u_2 & u_3+u_4 \\
    0 & 0 \\
  \end{array}
\right),
\left(
  \begin{array}{cccc}
   u_3-u_4 & u_1+u_2 \\
    0 & 0 \\
  \end{array}
\right),\\
\left(
  \begin{array}{cccc}
    0 & 0 \\
    u_1-u_2 & u_3+u_4 \\
  \end{array}
\right),
\left(
  \begin{array}{cccc}
    0 & 0 \\
    u_3-u_4 & u_1+u_2 \\
  \end{array}
\right)
\rangle$.\\
Let
$e_1=\left(
  \begin{array}{cccc}
    1 & 0 \\
    0 & 0 \\
  \end{array}
\right),\,
e_2=\left(
  \begin{array}{cccc}
    0 & 1 \\
    0 & 0 \\
  \end{array}
\right),\,
e_3=\left(
  \begin{array}{cccc}
    0 & 0 \\
    1 & 0 \\
  \end{array}
\right)$ and
$e_4=\left(
  \begin{array}{cccc}
    0 & 0 \\
    0 & 1 \\
  \end{array}
\right).$\\

Then, we have the following system of equations for $j=1,\ldots,4$:\\
$u_1u_je_1+u_1u_je_4=0,\,\,\,-u_2u_je_1+u_2u_je_4=0;$\\
$u_3u_je_2+u_3u_je_3=0,\,\,\,u_4u_je_2-u_4u_je_3=0;$\\
$(u_1u_j-u_2u_j)e_1+(u_3u_j-u_4u_j)e_3=0,\,(u_3u_j+u_4u_j)e_1+(u_1u_j+u_2u_j)e_3=0;$\\
$(u_1u_j-u_2u_j)e_2+(u_3u_j-u_4u_j)e_4=0,\,(u_3u_j+u_4u_j)e_2+(u_1u_j+u_2u_j)e_4=0;$\\
$(u_1u_j-u_2u_j)e_1+(u_3u_j+u_4u_j)e_2=0,\,(u_3u_j-u_4u_j)e_1+(u_1u_j+u_2u_j)e_2=0;$\\
$(u_1u_j-u_2u_j)e_3+(u_3u_j+u_4u_j)e_4=0,\,(u_3u_j-u_4u_j)e_3+(u_1u_j+u_2u_j)e_4=0.$\\

Now, we can use MATLAB to see that condition (\ref{eq1}) is satisfied for $L$. We then use Proposition \ref{p3} to get that $F$ defines an EIDS.
\end{ex}

\begin{ex}
Let $F:(\C^N,0)\to M_{2N}$ be given by
\[
F(x)=
\left(
  \begin{array}{cccc}
    x_1^d & x_2^d & \ldots & x_N^d \\
    x_N^d & x_1^d & \ldots & x_{N-1}^d  \\
  \end{array}
\right)
\,\mbox{ and }\,
L(u)=\left(
  \begin{array}{cccc}
    u_1 & u_2 & \ldots & u_N \\
    u_N & u_1 & \ldots & u_{N-1}  \\
  \end{array}
\right),
\]
where $x_i^d=u_i$ with $i=1,\ldots,N$. The variety $X=F^{-1}(M_{2N}^2)$ has codimension $N-1$, so $X$ is a determinantal curve in $\C^N$.
To prove that $X$ defines an EIDS, it is sufficient to prove that $L(u)$ satisfies (\ref{eq1}). The generators of (\ref{eq1}) are\\
$$\{B_s,R_{lk},C_{pq}:\,1\leq s,p,q\leq N;\,1\leq l,k\leq 2\},$$
where $B_s=(b_{ij})$, $\,i=1,\,2; \,1\leq j\leq N$ with $b_{1s}=u_s$; $b_{1j}=0$ for $j\neq s$ and $b_{2(s+1)}=u_s$; $b_{2j}=0$ for $j\neq s+1$ such that $b_{2(N+1)}=b_{21}=u_N$.

We denote by $e_r$ the $2\times N$-matrix, where $r=1,\ldots,2N$, having  $1$ in $r$-th position and $0$ elsewhere. Then, we have the following system of equations for $j=1,\ldots,N$ in $(N+N^2+4)N$ equations and $N^2(N+1)$ variables:\\
$u_1u_je_1+u_1u_je_{N+2}=0,\,u_2u_je_2+u_2u_je_{N+3}=0,\ldots, u_Nu_je_N+u_Nu_je_{N+1}=0;$\\
$u_mu_je_m+u_{m-1}u_je_{N+m}=0$ for $1\leq m\leq N$ such that $u_0=u_N$;\\
$\sum_{i=1}^{N}u_iu_je_i=0,\,u_Nu_je_1+\sum_{i=1}^{N-1}u_iu_je_{i+1}=0$;\\
$\sum_{i=1}^{N}u_iu_je_{N+i}=0,\,u_Nu_je_{N+1}+\sum_{i=1}^{N-1}u_iu_je_{i+1+N}=0.$\\

Now, we can use MATLAB to see that the matrix of coefficients is reduced to the matrix $\left(
  \begin{array}{cccc}
    I_{N^2(N+1)} \\
    O  \\
  \end{array}
\right)$. Therefore, condition (\ref{eq1}) is satisfied for $L$. We then use Proposition \ref{p3} to get that $F$ defines an EIDS.
\end{ex}

{\bf Acknowledgement}: Authors thank  A. Simis and H. Pedersen for the fruitful discussions on determinantal varieties.

\end{document}